\newcommand{\citep}{\cite}
\newtheorem{thm}{Theorem}
\newtheorem{assum}[thm]{Assumption}
\newtheorem{lem}[thm]{Lemma}
\DeclareMathAlphabet{\mathbbb}{U}{bbold}{m}{n}
\renewcommand{\leq}{\leqslant}
\renewcommand{\geq}{\geqslant}
\newcommand{\Ubar}{\Bar{U}}
\newcommand{\mycomment}[1]{}
\newcommand{\R}{\mathbb{R}}
\newcommand{\N}{\mathbb{N}}
\newcommand{\Z}{\mathbb{Z}}
\newcommand{\C}{\mathbb{C}}
\title{

Stabilization of a chain of 3 hyperbolic PDEs with 2 inputs in arbitrary position\thanks{This project received funding from the Agence Nationale de la Recherche via grant PANOPLY ANR-23-CE48-0001-01.}}
\author[1]{Adam Braun}
\author[1]{Jean Auriol}
\author[1]{Lucas Brivadis}
\affil[1]{Université Paris-Saclay, CNRS, CentraleSupélec, Laboratoire des Signaux et Systèmes, 91190, Gif-sur-Yvette, France. Emails:
        {\tt\small adam.braun@centralesupelec.fr; jean.auriol@centralesupelec.fr; lucas.brivadis@centralesupelec.fr}}%
\date{}
\begin{document}

\maketitle
\begingroup
\renewcommand\thefootnote{}\footnotetext{%
\footnotesize This paper has been accepted to \emph{IFAC WC 2026 (Busan)}.}
\endgroup
\begin{abstract}This paper addresses the stabilization of a chain of three coupled hyperbolic partial differential equations actuated by two control inputs applied at arbitrary nodes of the network. With the exception of configurations where one input is located at an endpoint, cases already well studied in the literature, all admissible two-input configurations are treated in this paper within a unified framework.
The proposed approach relies on a backstepping transformation combined with a reformulation of the closed-loop dynamics as an Integral Difference Equation (IDE). This IDE representation reveals a common structural pattern across configurations and clarifies the role played by delayed dynamics in the stability analysis. Within this formulation, the stabilization problem can be handled using existing IDE control techniques.
For most configurations, the stabilization of the PDE system requires an approximate spectral controllability assumption. Remarkably, one specific configuration can be stabilized without imposing any additional spectral condition. In contrast, we also provide an explicit example of a configuration for which the required spectral controllability property fails to hold.
\end{abstract}


\section{Introduction}
The controllability and stabilization of networks of hyperbolic Partial Differential Equations (PDEs) constitute a fundamental area of research with wide-ranging applications in engineering. Hyperbolic PDEs model systems governed by wave propagation and transport phenomena, such as fluid dynamics~\citep{HayatShang2021, Jean_Adam_Ouidir_Mathieu}, electrical transmission networks~\citep{Arrillaga1998}, and mechanical or structural vibrations~\citep{aarsnes2018torsional}.
In many practical settings, these systems are interconnected through network structures in which the dynamics of each subsystem influence, and are influenced by, neighboring components through boundary interactions. Such configurations arise, for instance, in traffic-flow management~\citep{YuKrstic2023}, where the propagation of density and velocity waves along interconnected road segments plays a central role.

Considerable attention has been devoted to the control of cascaded hyperbolic PDE systems, including ODE--PDE--ODE configurations. Most constructive designs rely on the backstepping method~\citep{deutscher2021backstepping,Deutscher2018,wang2020delay,irscheid2023output,auriol2023robustification}, with recent extensions based on infinite-dimensional dynamic augmentation~\citep{GEHRING2025112032}. 
Backstepping has proven effective for stabilizing chains of interconnected hyperbolic PDEs, initially for scalar subsystems~\citep{deutscher2021backstepping} and more recently for non-scalar ones~\citep{auriol2024output}. A common feature of recent contributions is the reformulation of the closed-loop dynamics as a \emph{time-delay system}, which serves as the basis for controller design~\citep{AuriolHDR}.

Most existing results assume that the control input is applied at one end of the chain. This configuration covers numerous applications, such as drilling systems or UAV--cable--payload structures~\citep{wang2020delay}, but does not reflect scenarios where the actuator lies at an interior node of the network. In traffic-flow control, for instance, ramp-metering devices may be located at junctions. 
For the case of two hyperbolic PDEs coupled at a junction, a stabilizing controller based on Fredholm transformations was proposed in~\citep{Redaud2022}. However, this approach does not scale well to chains with more than two PDEs.

To address these limitations, a recent methodology was introduced in~\citep[Chapter~9]{AuriolHDR} and~\citep{AuriolLCSS}. 
Using backstepping transformations, the PDE interconnection is reformulated as an Integral Difference Equation (IDE), which allows one to design an explicit stabilizing control law expressed in terms of integrals of the state and input histories. 
This approach was extended in~\citep{braun2025stabilizationchainhyperbolicpdes} to a chain of three hyperbolic PDEs actuated by a single input. 
In that setting, stabilization was achieved under a spectral controllability assumption, namely the absence of common zeros between two holomorphic functions arising in the IDE representation. From a practical and cost-oriented perspective, it is important to study the stabilization of the chain under different input configurations. 
Indeed, the placement and number of actuators have a direct impact on implementation cost, and understanding which configurations allow for stabilization provides valuable guidance for efficient actuator design and allocation.

In this paper, we investigate the stabilization of the same three-PDE chain for all configurations involving two boundary inputs. 
We omit the configurations in which one input is applied at an endpoint of the chain, since this configuration has already been treated in~\citep{auriol2024output}. 
For the remaining configurations, and even with two control inputs, the proposed stabilization methodology requires, in all but one case, a spectral controllability assumption stating that three associated holomorphic functions have no common zeros.
Although it is weaker than the spectral controllability condition required in the single-input case~\citep{braun2025stabilizationchainhyperbolicpdes}, it does not hold universally; in particular, we exhibit a specific configuration for which the assumption fails.

One configuration can be straightforwardly stabilized using the methodology of~\citep{auriol2024output}. For the others, using the backstepping transformation from~\citep{braun2025stabilizationchainhyperbolicpdes}, the original system is mapped to an intermediate target system of transport equations with modified boundary conditions.
The intermediate target system is then rewritten as an IDE for which an explicit stabilizing control law can be constructed following the methodology of~\citep{AuriolLCSS}. 

The structure of the paper follows this strategy. 
Section~\ref{Problem formulation} introduces the problem setting, the set of two-input configurations, and the standing assumptions. 
Section~\ref{Design:feedback} details the reformulation of the PDE system into an IDE for the remaining configurations.
Finally, Section~\ref{contre_exemple} provides a configuration for which the required spectral assumption is not satisfied.


\section{Problem Formulation}
\label{Problem formulation}
Let $L^2(a, b)$ for $a<b$ be the space of real valued square integrable functions over $(a, b)$,
and let $H^1(0, 1)=\{f\in L^2(0, 1): f'\in L^2(0, 1)\}$.
In this paper, we consider a chain of three subsystems interconnected through their boundaries. All configurations involving two inputs acting on the boundary of the chain are examined.  However, we deliberately omit the configurations in which one input is applied at an endpoint of the chain, since this problem has already been addressed in~\citep{auriol2024output}. Although the system is defined with several inputs $\Ubar_j$  the subsequent stability analysis is carried out under the assumption that exactly two inputs are active, with all others set to zero. 
Every subsystem $i$ is represented by a set of PDEs. For  all $i\in \{1,2,3\},$ the states $u_i, v_i$ satisfy
\begin{equation}
\label{system_edp_origin}
\begin{aligned} 
    &\partial_t u_i(t,x) + \lambda_i \partial_x u_i(t,x) = \sigma^{+}_i(x) v_i(t,x),\\
     &\partial_t v_i(t,x) - \mu_i \partial_x v_i(t,x) = \sigma^{-}_i(x) u_i(t,x),\\
&u_i(t,0) =  \delta_{i,2}\Ubar_4(t)+ \delta_{i,3}\Ubar_3(t)+q_{ii}v_{i}(t,0) +q_{i i-1}u_{i-1}(t,1)\\
&v_i(t,1) =\delta_{i,1}\Ubar_1(t) +\delta_{i,2}\Ubar_2(t)+\rho_{ii}u_i(t,1) + \rho_{ii+1}v_{i+1}(t,0)
\end{aligned}
\end{equation}
where $\delta_{i,j}$ is the Kronecker delta, and with $q_{10} = \rho_{34} = 0$, $t\geq 0$ and $x\in [0, 1]$. The velocities $\lambda_i$ and $\mu_i$ are positive real constants. The in-domain coupling terms $\sigma^{+}_i$ and $\sigma^{-}_i$ are continuous functions defined on $[0, 1]$ with real values.
The boundary coupling coefficients $q_{ij}$, $\rho_{ij}$
are real constants. Finally, we define $\tau_i$ as the total transport time associated with the subsystem $i$: \begin{equation}\label{tau_i def}\tau_i = \frac{1}{\lambda_i} + \frac{1}{\mu_i}.\end{equation}
System~\eqref{system_edp_origin} is represented in Figure~\ref{draw:system_edp}.
 We assume that the initial conditions \( (u_0, v_0) \) belong to \( H^1([0,1], \mathbb{R}^3) \times H^1([0,1], \mathbb{R}^3) \) and satisfy the zero-order compatibility conditions specified in~\citep{BastinCoron2016}.
 Consequently, by applying a folding transformation~\citep{de2020backstepping} and the results from~\citep[Appendix A]{BastinCoron2016}, the original open loop system~\eqref{system_edp_origin} admits a unique solution in $$ C^0([0,+\infty), H^1([0,1], \mathbb{R}^3)) \times C^0([0,+\infty), H^1([0,1], \mathbb{R}^3)). $$ Since the control to be designed will be admissible and continuous in time, the same reference guarantees the well-posedness of the resulting closed-loop system.

In this paper, given a configuration of two active inputs, we want to exponentially stabilize~\eqref{system_edp_origin} in the sense of the $L^2$-norm. More precisely,
we want to find feedback laws such that, in closed loop, there exists $\omega>0$ such that for all $(u_0, v_0)\in H^1([0,1] , \mathbb{R}^3) \times H^1([0,1], \mathbb{R}^3)$ satisfying the zero-order compatibility conditions, the unique solution to \eqref{system_edp_origin} is such that, for all $t\geq 0$,
        $$\|(u(t), v(t))\|_{L^2} \leq K e^{-\omega t}\|(u_0, v_0)\|_{L^2} .$$

\subsection{All two-input configurations}
\label{sec:all_config}
In this subsection, we give all the possible two-input configurations. We deliberately ignore the configurations where one input is at one end of the chain as the stabilization of such configurations has been studied in~\citep{auriol2024output}. 
By a symmetry argument, it is sufficient to restrict the analysis to the stabilization of the four configurations in which exactly two inputs are active (the remaining inputs being set to zero):
\begin{itemize}
    \item\label{config:2}the active inputs are $\Ubar_1$ and $\Ubar_4$; 
    \item\label{config:1} the active inputs are $\Ubar_4$ and $\Ubar_2$; 
    \item\label{config:3}the active inputs are $\Ubar_4$ and $\Ubar_3$;
    \item\label{config:4} the active inputs are $\Ubar_1$ and $\Ubar_3$.
\end{itemize}
In the remainder of the article, we refer to these cases using the terminology \emph{configuration} $(\Ubar_i,\Ubar_j)$.

  \begin{figure}[htb]%

\begin{center}

 \scalebox{1}{

\begin{tikzpicture}



\draw [>=stealth,->,red,very thick] (0,0) -- (3,0);

\draw [red] (1.5,0) node[above]{$u_1(t,x)$};

\draw [>=stealth,<-,blue,very thick] (0,-1.5) -- (3,-1.5);

\draw [blue] (1.5,-1.5) node[below]{$v_1(t,x)$};


\draw [>=stealth,<-,dashed, thick] (1,-1.5) -- (1,0);

\draw(1,-0.75) node[left]{$\sigma^{-}_1$};

\draw [>=stealth,->,dashed, thick] (2,-1.5) -- (2,0);

\draw(2,-0.75) node[left]{$\sigma^{+}_1$};


\draw [blue,>=stealth, thick](-0.5,-0.75) arc (-180:-135:1.1);

\draw [blue,>=stealth,->, thick](-0.5,-0.75) arc (-180:-225:1.1);

\draw [blue] (-0.5,-0.75) node[right]{$q_{11}$};
\draw [>=stealth,<-,purple,very thick] (3.2,-1.6) -- (3.2,-2);

\draw [purple] (3.2,-2) node[below]{$\Ubar_1(t)$};

\draw [red,>=stealth, thick](3.5,-0.75) arc (0:45:1.1);

\draw [red,>=stealth,->, thick](3.5,-0.75) arc (0:-45:1.1);

\draw [red] (3.5,-0.75) node[left]{$\rho_{11}$};


\draw [>=stealth,->,green!50!black!90,very thick] (3.5,0) -- (4.5,0);

\draw [color=green!50!black!90] (4,0) node[above]{$q_{21}$};

\draw [>=stealth,<-,green!50!black!90,very thick] (3.5,-1.5) -- (4.5,-1.5);

\draw [color=green!50!black!90] (4,-1.5) node[below]{$\rho_{12}$};




\draw [>=stealth,->,red,very thick] (5,0) -- (8,0);

\draw [red] (6.5,0) node[above]{$u_{2}(t,x)$};

\draw [>=stealth,<-,blue,very thick] (5,-1.5) -- (8,-1.5);

\draw [blue] (6.5,-1.5) node[below]{$v_{2}(t,x)$};


\draw [>=stealth,<-,dashed, thick] (6,-1.5) -- (6,0);

\draw(6,-0.75) node[left]{$\sigma^{-}_{2}$};

\draw [>=stealth,->,dashed, thick] (7,-1.5) -- (7,0);

\draw(7,-0.75) node[left]{$\sigma^{+}_{2}$};


\draw [blue,>=stealth, thick](4.5,-0.75) arc (-180:-135:1.1);

\draw [blue,>=stealth,->, thick](4.5,-0.75) arc (-180:-225:1.1);
\draw [red] (8.5,-0.75) node[left]{$\rho_{22}$};
\draw [blue] (4.5,-0.75) node[right]{$q_{22}$};

\draw [>=stealth,<-,purple,very thick] (4.9,0.1) -- (4.9,0.5);

\draw [purple] (4.9,0.5) node[above]{$\Ubar_4(t)$};

\draw [red,>=stealth, thick](8.5,-0.75) arc (0:45:1.1);

\draw [red,>=stealth,->, thick](8.5,-0.75) arc (0:-45:1.1);


\draw [>=stealth,<-,purple,very thick] (8.2,-1.6) -- (8.2,-2);

\draw [purple] (8.2,-2) node[below]{$\Ubar_2(t)$};


\draw [>=stealth,->,green!50!black!90,very thick] (8.5,0) -- (9.5,0);

\draw [color=green!50!black!90] (9,0) node[above]{$q_{32}$};

\draw [>=stealth,<-,green!50!black!90,very thick] (8.5,-1.5) -- (9.5,-1.5);

\draw [color=green!50!black!90] (9,-1.5) node[below]{$\rho_{23}$};



\draw [>=stealth,->,red,very thick] (10,0) -- (13,0);

\draw [red] (11.5,0) node[above]{$u_3(t,x)$};

\draw [>=stealth,<-,blue,very thick] (10,-1.5) -- (13,-1.5);

\draw [blue] (11.5,-1.5) node[below]{$v_3(t,x)$};


\draw [>=stealth,<-,dashed, thick] (11,-1.5) -- (11,0);

\draw(11,-0.75) node[left]{$\sigma^{-}_3$};

\draw [>=stealth,->,dashed, thick] (12,-1.5) -- (12,0);

\draw(12,-0.75) node[left]{$\sigma^{+}_3$};


\draw [blue,>=stealth, thick](9.5,-0.75) arc (-180:-135:1.1);

\draw [blue,>=stealth,->, thick](9.5,-0.75) arc (-180:-225:1.1);

\draw [blue] (9.5,-0.75) node[right]{$q_{33}$};

\draw [red,>=stealth, thick](13.5,-0.75) arc (0:45:1.1);

\draw [red,>=stealth,->, thick](13.5,-0.75) arc (0:-45:1.1);

\draw [red] (13.5,-0.75) node[left]{$\rho_{33}$};

\draw [>=stealth,<-,purple,very thick] (9.9,0.1) -- (9.9,0.5);

\draw [purple] (9.9,0.5) node[above]{$\Ubar_3(t)$};


\draw [>=stealth,<->,very thick] (4.5,-3) -- (8.5,-3);

\draw [>=stealth,<->,very thick] (9.5,-3) -- (13.5,-3);

\draw (8,-3) node[above]{1};

\draw (10,-3) node[above]{0};

\draw (5,-3) node[above]{0};

\draw [very thick] (3,-3) -- (3,-3);

\draw (3,-3) node[above]{1};

\draw (13,-3) node[above]{1};

\draw [>=stealth,<->,very thick] (-0.5,-3) -- (3.5,-3);

\draw (0,-3) node[above]{0};


\end{tikzpicture}}
\caption{Schematic representation of system~\eqref{system_edp_origin}}
\label{draw:system_edp}
\end{center}
\end{figure}
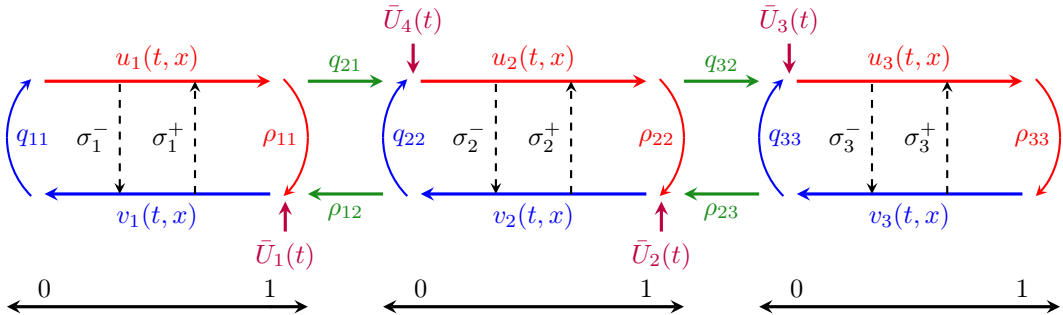

\subsection{Structural Assumptions}
In this section we state the assumptions we use to design our stabilizing control laws.

\begin{assum} \label{Assum_stab_OL}
If $\sigma^{+}_i$ and $\sigma^{-}_i$ are equal to zero for all $i\in \{1,2,3\}$,
then
system~\eqref{system_edp_origin} with all inputs equal to zero is exponentially stable.
\end{assum}
This assumption is necessary to design robust to delays feedback control laws (see~\citep{AuriolHDR} and~\citep{logemann1996conditions}.
As Assumption~\ref{Assum_stab_OL} implies that the boundary coupling terms are dissipative, the presence of the in-domain coupling terms is the main difficulty for the stabilization of~\eqref{system_edp_origin}.
Depending on the configuration, our control design relies on the fact that some of the boundary coupling terms are non-zero, while others may be null. To simplify the presentation, we assume that all of them are non-zero.
\begin{assum} \label{assump:coupling_term_green}
All the coefficients $q_{ij}, \rho_{ij}$ are non-zero.\footnote{\label{footnote:config_assump}One could replace this assumption by asking the following coefficients to be \emph{nonzero}, for \( (\Ubar_1, \Ubar_3): q_{21}, \rho_{23} \), for \( (\Ubar_4, \Ubar_2): q_{32}, \rho_{12} \), for \( (\Ubar_1, \Ubar_4) \) and \((\Ubar_1, \Ubar_2): q_{32}. \)}
\end{assum}
In every configuration except $\Ubar_1, \Ubar_4$, an additional approximate spectral controllability assumption is required for the proposed approach. 
This assumption will be introduced in Section~\ref{Design:feedback}, once the necessary background has been established.

\section{Design of a stabilizing control law in every configuration}
\label{Design:feedback}
For every configuration listed in Section~\ref{sec:all_config}, we show that a stabilizing feedback control law can be designed by reformulating the problem within a framework for which stabilizing strategies are already available.

\subsection[Configuration (U1,U4)]{Configuration $(\Ubar_1,\Ubar_4)$}

\label{section:config1}
Let us define
\[
\Ubar_1(t) = U_1(t)-\rho_{12}\, v_2(t,1), 
\qquad 
\Ubar_4(t) = U_4(t) - q_{21}\, u_1(t,1),
\]
where $U_1, U_4$ denotes new control inputs.  
We now have two independent systems; system 1 actuated by $U_1$ and the chain formed by systems 2 and 3 actuated at one end by $U_4$. The first system can be stabilized using the controller from~\citep{coron2013local}. The second interconnected system can be stabilized by applying the methodology developed in~\citep{auriol2024output}.
These choices fully decouple the first subsystem of~\eqref{system_edp_origin} from the second and third ones.
\subsection{Other configurations}
{In the remaining configurations, we show that the stabilization of the PDE system~\eqref{system_edp_origin} is equivalent to the stabilization of an abstract IDE. 
The latter is achieved under an approximate spectral controllability assumption.}
{Consider the following IDE, \begin{align}
    \label{eq:meta_IDE}
    x(t)&=ax(t-\tau) + \int_0^\tau N(\nu)x(t-\nu)d\nu + b_1V_1(t-\theta_1) \nonumber+\int_0^{\theta_1+\delta_1}M_1(\nu) V_1(t-\nu)d\nu + b_2V_2(t-\theta_2) \\
    &~+\int_0^{\theta_2+\delta_2} M_2(\nu)V_2(t-\nu)d\nu,
\end{align}
with $a\in (-1,1), a\neq0$ $\tau>0$, for all $j\in \{1,2\}$, $b_j\neq0$, $\delta_j\geq0$, $V_j$ is a control input, $\theta_j>0$, $M_j$, $N$ are piecewise continuous functions. The well posedness of~\eqref{eq:meta_IDE} is ensured by~\citep[Proposition~2]{braun2026spectralexponentialstabilitycriterion}, see also~\cite{Fueyo_Baratchart} for manipulation of similar functional equations.
Consider a feedback law of the form
\begin{align}\label{eq:IDE_feedback}
    V_j(t) &= \int_0^{T_0}f^j(\nu) x(t-\nu)d\nu + \int_0^{T_1}g^j_1(\nu)V_1(t-\nu)d\nu+ \int_0^{T_2}g^j_2(\nu)V_2(t-\nu)d\nu,~j\in \{1,2\},
\end{align}
where $f^j,g^j_1,g^j_2$ are piecewise continuous functions and $T_i>0$ for all $i\in\{0,1,2\}$. 
The closed-loop IDE \eqref{eq:meta_IDE}-\eqref{eq:IDE_feedback} is exponentially stable
if there exist
$\omega_0 > 0,$ and $K_0 \geq 1,$ such that for all $x_0 \in L^2(-\tau,0)$, the unique corresponding solution to the IDE~\eqref{eq:meta_IDE} 
    is such that, for all $t\geq0$,    $$\|x_t\|_{L^2(-\tau_,0)} 
    \leq K_0\mathrm{e}^{-\omega_0 t}\|x_0\|_{L^2(-\tau_,0)},$$
    where $x_t(\cdot)$ is the partial trajectory associated to $x$  defined for all $\nu \in [-\tau, 0]$ by $x_t(\nu)= x(t+\nu)$.}
\begin{lem}\label{lem:meta_IDE}
Under Assumptions~\ref{Assum_stab_OL},~\ref{assump:coupling_term_green}, 
 there exists $a\in (-1,1), a\neq0$, $\tau>0$, $\delta_j\geq0$, $b_j\neq0$, $\theta_j>0$, $M_j$, $N$ for all $j\in \{1,2\}$ and a feedback law of the form \eqref{eq:IDE_feedback} such that, the exponential stability of the IDE~\eqref{eq:meta_IDE}-\eqref{eq:IDE_feedback}.
implies the exponential stabilizability of the PDE~\eqref{system_edp_origin} in configurations $(\Ubar_1, \Ubar_3)$, $(\Ubar_4, \Ubar_3)$ and $(\Ubar_4, \Ubar_2)$. 
\end{lem}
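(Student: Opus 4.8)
The plan is to prove the reduction configuration by configuration, using the backstepping transformation of \citep{braun2025stabilizationchainhyperbolicpdes} to reach an intermediate target system, reformulating that target system as a scalar IDE, and then transferring stability back through the inverse transformation. First, for each of the three configurations I would apply the invertible Volterra backstepping transformation of \citep{braun2025stabilizationchainhyperbolicpdes} to map \eqref{system_edp_origin} onto a target system consisting of three pairs of pure transport equations (the in-domain couplings $\sigma_i^{\pm}$ removed) with modified boundary conditions that retain the boundary couplings $q_{ij},\rho_{ij}$. Since this transformation and its inverse are bounded on $L^2([0,1],\R^3)^2$, the $L^2$-exponential stability of the target system is equivalent to that of \eqref{system_edp_origin}; it therefore suffices to stabilize the target system, and to re-express the active inputs $\Ubar$ accordingly, thereby defining the new inputs $V_1,V_2$.

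Second, I would reformulate the target system as an IDE by integrating along characteristics. On each subsystem the transport equations express the boundary traces as delayed copies of one another, with delays $1/\lambda_i$, $1/\mu_i$, hence $\tau_i$, while the boundary conditions close these relations through the couplings $q_{ij},\rho_{ij}$ and the injected inputs. Eliminating the intermediate boundary variables across the cascade yields a single scalar IDE for a well-chosen boundary trace $x$ of exactly the template \eqref{eq:meta_IDE}. From this elimination I would read off the parameters: $\tau$ as the round-trip transport time at the reconstruction node, $a$ as the associated product of boundary-reflection coefficients (so $a\neq 0$ by Assumption~\ref{assump:coupling_term_green} and $|a|<1$ by Assumption~\ref{Assum_stab_OL}, which forces the boundary couplings to be dissipative), $b_j,\theta_j$ as the gain and pure transport delay with which each $V_j$ reaches $x$, and $N,M_j$ as the memory kernels produced by propagating the residual inter-subsystem couplings along the characteristics, hence piecewise continuous and compactly supported.

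Third, I would check the case distinctions claimed in the statement and then transfer stability. The placement of the two actuators only changes which boundary carries the control, and therefore the delays $\theta_j$ and the presence or absence of an integral tail $\delta_j$ on each input; for $(\Ubar_1,\Ubar_3)$, $(\Ubar_4,\Ubar_3)$, and $(\Ubar_4,\Ubar_2)$ the input routed to $V_1$ reaches $x$ through a single characteristic with no intervening coupling, which produces a purely delayed term and hence $\delta_1=0$, and I would record the explicit correspondence between $V_1,V_2$ and the active $\Ubar_i,\Ubar_j$ in each case. Given the feedback \eqref{eq:IDE_feedback}, substituting these input relations and composing with the inverse backstepping transformation produces an admissible feedback for \eqref{system_edp_origin}. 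If the closed-loop IDE is exponentially stable in $L^2(-\tau,0)$, then every boundary trace of the target system decays exponentially; integrating along the characteristics reconstructs the full distributed target state with an $L^2$ norm dominated by a sliding window of $x_t$ and of the inputs, which inherits the decay, and the inverse transformation yields the desired bound on \eqref{system_edp_origin}.

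The main obstacle I anticipate is the elimination step that collapses the vector-valued boundary dynamics of the three-subsystem cascade into a single scalar IDE of the precise form \eqref{eq:meta_IDE}: one must verify that the chosen scalar trace $x$ faithfully encodes the whole loop, so that its decay forces the decay of every other trace with no hidden undamped mode, and that the eliminated couplings contribute only piecewise-continuous, finitely-supported kernels $N,M_j$ rather than distributional or non-decaying terms. Obtaining $\delta_1=0$ uniformly across the three configurations while simultaneously matching $a,\tau,b_j,\theta_j$ to the admissible ranges required in \eqref{eq:meta_IDE} is the delicate bookkeeping on which the whole reduction rests.
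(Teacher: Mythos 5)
Your outline matches the paper's strategy for configurations $(\Ubar_1,\Ubar_3)$ and $(\Ubar_4,\Ubar_3)$: backstepping to an intermediate target system, method of characteristics to obtain a scalar IDE for a single boundary trace ($\alpha_2(t,1)$, resp.\ $\beta_1(t,1)$), with $|a|<1$ from Assumption~\ref{Assum_stab_OL} and $a,b_j\neq 0$ from Assumption~\ref{assump:coupling_term_green}. (One small inaccuracy even there: the target system is not three pairs of \emph{pure} transport equations; the second subsystem retains source terms proportional to $v_3(t,0)$ or $\Ubar_2(t)$, and these residual couplings are precisely what generate the kernels $M_j$ and the tails $\delta_j$.)

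The genuine gap is configuration $(\Ubar_4,\Ubar_2)$. You assert that in all three configurations a single well-chosen trace collapses the cascade into one IDE of the template~\eqref{eq:meta_IDE}, with $\delta_1=0$ because ``the input routed to $V_1$ reaches $x$ through a single characteristic with no intervening coupling.'' Both claims fail here. First, the paper must track \emph{two} coupled scalar traces, $x_3(t)=\alpha_3(t,0)$ and $x_1(t)=\beta_1(t,1)$, each satisfying its own IDE, because the actuated node sits strictly between the two reflection loops; no single trace eliminates the other without further work. Reducing the pair to one equation of the form~\eqref{eq:meta_IDE} requires a nontrivial cascade of substitutions: a state shift $\Bar{x}_3=x_3-\int_0^{1/\lambda_2}M_{32}(\nu)\Ubar_2(t-\nu)d\nu$ (iterated if $\tau_3\leq 1/\lambda_2$), input redefinitions $\Tilde U_4$, $U_2$, $\Bar{\Bar{U}}_4(t)=\Tilde U_4(t)-q_{33}\rho_{33}\Tilde U_4(t-\tau_3)$, and finally the filtered state $x(t)=x_1(t)-q_{33}\rho_{33}x_1(t-\tau_3)$. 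Second, after this reduction the distributed-delay support of the $V_1$ channel strictly exceeds its pointwise delay, i.e.\ $\delta_1>0$ in this configuration --- which is exactly why the lemma restricts the claim $\delta_1=0$ to $(\Ubar_1,\Ubar_3)$ and $(\Ubar_4,\Ubar_3)$, and why Theorem~\ref{thm:main_stab} treats $(\Ubar_4,\Ubar_2)$ separately. Your uniform argument would therefore prove a statement inconsistent with the one being claimed, and the bookkeeping you flag as ``delicate'' is in fact the missing core of the proof for this case.
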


The proof of Lemma~\ref{lem:meta_IDE} is postponed to Appendix~\ref{appendix:lem_IDE} for configurations $\Ubar_1, \Ubar_3$ and $\Ubar_4, \Ubar_3$ and to Appendix~\ref{appendix:proof_Lemma_config_U4_U2} for configuration $\Ubar_4, \Ubar_2$. {Our objective is now to stabilize the IDE~\eqref{eq:meta_IDE}.
As $|a|<1$, the principal part of the IDE (i.e., the difference equation $x(t) = ax(t-\tau)$) is exponentially stable~\citep{halebook}. Hence, $\int_0^\tau N(\nu)x(t-\nu)d\nu$ is the main destabilizing term.} The proposed control strategy relies on the following assumption that corresponds to a spectral approximate controllability condition~\citep{fattorini1966some},~\citep{fueyo2025lqapproximatecontrollabilityfrequency}, as in \citep[Chapter~9]{AuriolHDR}. This condition can also be related to an approximated controllability condition for the PDE system~\citep{AuriolSIAM}. 
\begin{assum}[Spectral Assumption]
\label{spectral_assump}
    For all  $s\in \mathbb{C}$,
    $$\operatorname{rank} [F_0(s), F_1(s), F_2(s)] = 1,$$
    where, for all $j\in \{1,2\}$
    \begin{align*}F_0(s) &= 1-ae^{-\tau s} - \int_0^\tau N(\nu)e^{-\nu s}d\nu,\quad F_j(s) = b_je^{-\theta_j s}+\int_0^{\theta_j +\delta_j} M_j(\nu)e^{-\nu s}d\nu.\end{align*}
\end{assum}
Assumption~\ref{spectral_assump} can be weakened to stabilizability by requiring full rank only on the shifted right half-plane. Under this assumption, the stabilization of a general IDE was recently achieved in~\cite{braun_corona}.
In Section~\ref{contre_exemple}, we will construct an example of a configuration where this assumption is not satisfied.
{
In the configuration in which only $\Ubar_1$ acts as an input for the PDE system~\eqref{system_edp_origin}, a stabilizing controller has been designed in~\citep{braun2025stabilizationchainhyperbolicpdes} under the stronger spectral controllability condition, for all $s\in \C$,
$\operatorname{rank}[F_0(s), F_1(s)] =1.$
}
\begin{thm}
    \label{thm:main_stab}
    Under Assumptions~\ref{Assum_stab_OL},~\ref{assump:coupling_term_green}, and,~\ref{spectral_assump} 
there exist state-feedback control laws that ensure the exponential stability of the PDE system~\eqref{system_edp_origin} in configurations $(\Ubar_1, \Ubar_3)$, $(\Ubar_4, \Ubar_3)$ and $(\Ubar_4, \Ubar_2)$. 
\end{thm}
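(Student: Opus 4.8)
The plan is to derive Theorem~\ref{thm:main_stab} as a direct consequence of Lemma~\ref{lem:meta_IDE} together with a constructive stabilization of the abstract IDE~\eqref{eq:meta_IDE} under Assumption~\ref{spectral_assump}. By Lemma~\ref{lem:meta_IDE}, for each of the three configurations $(\Ubar_1,\Ubar_3)$, $(\Ubar_4,\Ubar_3)$ and $(\Ubar_4,\Ubar_2)$ there exist data $a,\tau,\delta_j,b_j,\theta_j,M_j,N$ and a mapping of the active PDE inputs to the IDE controls $V_1,V_2$ such that the exponential stability of the closed-loop IDE~\eqref{eq:meta_IDE}--\eqref{eq:IDE_feedback} (with $\delta_1=0$) implies the exponential stabilizability of~\eqref{system_edp_origin}. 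Thus the entire burden reduces to exhibiting piecewise continuous kernels $f^j,g^j_1,g^j_2$ and horizons $T_0,T_1,T_2>0$ in the feedback~\eqref{eq:IDE_feedback} that render the IDE exponentially stable in the $L^2(-\tau,0)$ sense.

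First I would set up the Laplace/frequency-domain picture. Since $|a|<1$, the difference operator $x(t)=ax(t-\tau)$ is exponentially stable, so the only obstruction to stability of~\eqref{eq:meta_IDE} is the integral term $\int_0^\tau N(\nu)x(t-\nu)\,d\nu$, which through the characteristic function $F_0(s)$ can introduce unstable spectral values. The two inputs enter through $F_1(s)$ and $F_2(s)$. The goal is to design the feedback so that the closed-loop characteristic equation has all its roots in a left half-plane $\{\operatorname{Re}s\le -\omega_0\}$, and simultaneously to preserve the neutral/retarded structure so that the stabilized system remains robust to delay (as required by Assumption~\ref{Assum_stab_OL}). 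I would invoke directly the IDE control methodology of~\citep{AuriolLCSS} and~\citep[Chapter~9]{AuriolHDR}: under the approximate spectral controllability condition, one constructs an explicit feedback of the integral form~\eqref{eq:IDE_feedback} that moves all spectral values of the closed loop strictly into the open left half-plane while keeping $|a|<1$, hence guaranteeing exponential decay of $\|x_t\|_{L^2(-\tau,0)}$.

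The key point where Assumption~\ref{spectral_assump} is used is in ensuring that the triple $(F_0,F_1,F_2)$ has no common complex zero, i.e.\ $\operatorname{rank}[F_0(s),F_1(s),F_2(s)]=1$ for every $s\in\mathbb{C}$. Wherever $F_0$ has a zero $s_0$ in the closed right half-plane, the condition guarantees that at least one of $F_1(s_0),F_2(s_0)$ is nonzero, so that the corresponding control channel can act on that spectral value; this is precisely the approximate controllability needed to assign/relocate the unstable modes. I would therefore verify that the spectral assumption is exactly the nondegeneracy hypothesis under which the explicit feedback of~\citep{AuriolLCSS} is well defined and achieves exponential stability, and then quote that result to conclude exponential stability of~\eqref{eq:meta_IDE}--\eqref{eq:IDE_feedback} with $\delta_1=0$.

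Finally, I would chain the two implications: the IDE feedback yields exponential stability of the closed-loop IDE, and by Lemma~\ref{lem:meta_IDE} this implies exponential stabilizability of the PDE~\eqref{system_edp_origin} in each of the three configurations; unwinding the invertible backstepping transformation from~\citep{braun2025stabilizationchainhyperbolicpdes} (which has a bounded inverse) transfers the $L^2$ decay back to the original $(u,v)$ variables, giving the desired bound $\|(u(t),v(t))\|_{L^2}\le K e^{-\omega t}\|(u_0,v_0)\|_{L^2}$. I expect the main obstacle to lie not in the chaining logic but in confirming that the approximate spectral controllability condition of Assumption~\ref{spectral_assump} suffices for the \emph{explicit} construction of~\eqref{eq:IDE_feedback} to place all right-half-plane spectral values while preserving the contractive neutral part $|a|<1$; in particular one must check that the stabilizing kernels remain piecewise continuous and that no spectral value escapes to the imaginary axis in the limit, which is where the delicate frequency-domain estimates of the cited IDE framework are needed.
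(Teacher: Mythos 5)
Your overall chaining logic (Lemma~\ref{lem:meta_IDE} $\Rightarrow$ stabilize the IDE $\Rightarrow$ transfer back to the PDE) matches the paper, but there is a genuine gap at the central step: you assert that the methodology of~\citep{AuriolLCSS} can be ``invoked directly'' under Assumption~\ref{spectral_assump}. That framework is a \emph{single-input} design whose hypothesis is the two-function condition $\operatorname{rank}[F_0(s),F_1(s)]=1$ for all $s\in\C$. Assumption~\ref{spectral_assump} only gives the strictly weaker three-function condition $\operatorname{rank}[F_0(s),F_1(s),F_2(s)]=1$; in particular $F_0$ and $F_1$ may well share right-half-plane zeros, at which the channel $V_1$ alone cannot act. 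Nothing in your argument explains how the second input is actually used, so the claim that ``one constructs an explicit feedback of the integral form~\eqref{eq:IDE_feedback}'' is unsupported precisely where the new difficulty lies.

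The paper closes this gap with a reduction step that is the real content of the proof: it spends the second input as a \emph{static integral feedback with a scalar gain $\alpha$} so as to perturb the characteristic pair into one satisfying the single-input condition. Concretely, for configurations $(\Ubar_1,\Ubar_3)$ and $(\Ubar_4,\Ubar_3)$ one sets $V_2(t)=\alpha\int_0^\tau x(t-\nu)\,d\nu$, which replaces $F_0$ by $F_0-\alpha\int_0^T e^{-\nu s}d\nu\,F_2$; for configuration $(\Ubar_4,\Ubar_2)$ one instead sets $V_2(t)=-\alpha\int_0^{\theta_1+\delta_1}V_1(t-\nu)\,d\nu$, which replaces $F_1$ by $F_1-\alpha\int_0^{\theta_1+\delta_1}e^{-\nu s}d\nu\,F_2$. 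Lemma~\ref{lem:controllability_spect} (a countability argument on the zeros of the holomorphic function $f_1$) guarantees that some real $\alpha$ makes the resulting pair have no common zero, after which --- and only after which --- the single-input construction of~\citep{AuriolLCSS} applies. Note also that the two sub-cases genuinely require different reductions (feedback of the state versus feedback of the other input), because the resulting distributed delays exceed the pointwise delay in the state in one case and in the input in the other; your proposal does not distinguish them. Without this reduction, or an equivalent genuinely multi-input pole-placement argument, the proof is incomplete.
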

  
\begin{proof}
According to Lemma~\ref{lem:meta_IDE}, we just have to exponentially stabilize the IDE~\eqref{eq:meta_IDE} 
Our goal will be to rewrite the system in the framework proposed in~\citep{AuriolLCSS}.
    Taking the Laplace transform of the IDE~\eqref{eq:meta_IDE}, we obtain for all $s\in \C$,
    $$F_0(s)x(s) = F_1(s)V_1(s) + F_2(s)V_2(s),$$
where $x(s)$, $V_j(s)$ are the Laplace transforms of $x$, respectively $V_j$.
 As $b_j\neq 0$ for all $j\in \{1,2\}$, $F_0, F_1, F_2$ are not identically equal to zero.
From this point, we treat configurations $\Ubar_1,\Ubar_3$ and $\Ubar_4,\Ubar_3$ separately from configuration $\Ubar_4,\Ubar_2$.
\subsubsection[Configurations 1,3 et 4,3]{Configurations $(\Ubar_1,\Ubar_3)$ and $(\Ubar_4,\Ubar_3)$.} The following proof holds for configurations $\Ubar_1,\Ubar_3$ and $\Ubar_4,\Ubar_3$. Recall that in these configurations, $\delta_1=0$ in the IDE~\eqref{eq:meta_IDE}.
By Appendix~\ref{proof:lem_spect}, there exist $\alpha\in \R$ and $T\in (0, \tau)$ such that, for all $s\in \C$
\begin{equation}\operatorname{rank}\Big[F_0(s) -\alpha \int_0^T e^{-\nu s}d\nu F_2(s),F_1(s)\Big]=1,\label{spectral_controllability_lcss}\end{equation} 
Then, choosing, 
\begin{equation}V_2(t):= \alpha \int_0^T x(t-\nu)d\nu, \label{def:W2}\end{equation}
the IDE~\eqref{eq:meta_IDE} becomes (in Laplace space),
\begin{equation}\label{eq:IDE_x_U1}\Big(F_0(s)-\alpha \int_0^T e^{-\nu s}d\nu F_2(s)\Big)x(s) = F_1(s)V_1(s).\end{equation}
which equivalently rewrites, for all $t>\tau + \theta_2 +\delta_2+\theta_1$,
\begin{align}x(t) &= ax(t-\tau) + \int_0^{\tau +\theta_2 +\delta_2}\Tilde{N}(\nu)x(t-\nu)d\nu + b_1V_1(t-\theta_1)+\int_0^{\theta_1}M_1(\nu) V_1(t-\nu), \label{IDE_x_W1_temp}\end{align}
with $\Tilde{N}$ a piecewise continuous function.
{Because of Condition~\eqref{spectral_controllability_lcss}, the IDE~\eqref{IDE_x_W1_temp} falls within the framework of~\citep{AuriolLCSS}, the only difference being that the distributed delay exceeds the pointwise delay appearing in the state. 
However, the method proposed in~\citep{AuriolLCSS} extends straightforwardly.\footnote{This would not be the case for configuration $\Ubar_4, \Ubar_2$, hence the dissociation.} 
}
More precisely, we can design an auto-regressive stabilizing feedback  control law, defined as
\begin{equation} \label{eq:W1}V_1(t) = \int_0^{\tau+\theta_2+\delta_2} g(\nu)x(t-\nu)d\nu + \int_0^{\theta_1}f(\nu)V_1(t-\nu),\end{equation}
where the gains $g$ and $f$ are piecewise continuously differentiable. As shown in~\citep{AuriolLCSS}, they can be computed by solving a set of Fredholm equations. 
\subsubsection[Config 4,2]{Configuration $(\Ubar_4, \Ubar_2)$.} 
 Due to Appendix~\ref{proof:lem_spect}, there exist $\alpha\in \R$ and $T\in (0, \theta_1 + \delta_1)$ such that, for all $s\in \C$
\begin{equation}\operatorname{rank}\Big[F_0(s) ,F_1(s)-\alpha \int_0^{T}e^{-\nu s}d\nu F_2(s)\Big]=1,\label{spectral_controllability_lcss2}\end{equation} 
Then, choosing, 
\begin{equation}V_2(t):= -\alpha \int_0^{T} V_1(t-\nu)d\nu, \label{def:V2}\end{equation}
the IDE~\eqref{eq:meta_IDE} becomes (in Laplace space),
\begin{equation}\label{eq:IDE_x_V1}F_0(s)x(s) = \Big(F_1(s)-\alpha \int_0^{T} e^{-\nu s}d\nu F_2(s)\Big)V_1(s).\end{equation}
{which equivalently rewrites, for all $t>\tau + \theta_2 +\theta_1 +\delta_1 + \delta_2$,
\begin{align}x(t) &= ax(t-\tau) + \int_0^{\tau}N(\nu)x(t-\nu)d\nu+ b_1V_1(t-\theta_1)+\int_0^{\theta_1+\theta_2+\delta_1+\delta_2}\Tilde{M}_1(\nu) V_1(t-\nu), \label{IDE_x_V1_temp}\end{align}
with $\Tilde{M}$ a piecewise continuous function.}
{Due to Condition~\eqref{spectral_controllability_lcss2}, the IDE~\eqref{IDE_x_V1_temp} falls within the framework of~\citep{AuriolLCSS}, the only difference being that the distributed delay exceeds the pointwise delay appearing in the input. 
In this setting, the method introduced in~\citep{AuriolLCSS} extends straightforwardly.
}
More precisely, we can design an auto-regressive stabilizing feedback  control law, defined as
\begin{equation} \label{eq:V1}V_1(t) = \int_0^{\tau} g(\nu)x(t-\nu)d\nu + \int_0^{\theta_1+\theta_2+\delta_1+\delta_2}f(\nu)V_1(t-\nu),\end{equation}
where the gains $g$ and $f$ are piecewise continuously differentiable. As shown in~\citep{AuriolLCSS}, they can be computed by solving a set of Fredholm equations. 
This concludes the proof.
\end{proof}
\section{A nontrivial example where the Spectral Assumption is not satisfied}
\label{contre_exemple}
In this section we construct a nontrivial example where the IDE formulation does not satisfy Assumption~\ref{spectral_assump}. {This configuration constitutes a non-degenerate case for which our approach does not apply.}
In system~\eqref{system_edp_origin}, we choose Configuration~$(\bar U_1,\bar U_3)$, 
we take $\lambda_i>0$, $\mu_i >0$,
$\sigma_1^-=\sigma_2^+=\sigma_3^+=0$.
 The degrees of freedom available to us are the choice of 
$\sigma_1^+$, $\sigma_2^-$, $\sigma_3^-$, together with the choice of the boundary coupling coefficients $q_{ij}$ and $\rho_{ij}$. 
The coefficients $q_{21}$ and $\rho_{23}$ must be nonzero in order for the inputs to act on the second subsystem 
and all coefficients are selected so that Assumption~\ref{Assum_stab_OL} holds. We set $q_{32} = q_{33} = \rho_{11} = \rho_{12} =0$, hence Assumption~\eqref{assump:coupling_term_green} will not be satisfied. However this is not a problem as it was a non necessary assumption used to simplify the exposition (see footnote~\ref{footnote:config_assump}).
Using the method of characteristics, we will rewrite our problem~\eqref{system_edp_origin} as a IDE (no backstepping is needed in this special decoupled case). Then, we will choose the remaining boundary coupling coefficients $q_{ij}, \rho_{ij}$ and the coupling terms $\sigma_1^+,\sigma_2^-,\sigma_3^-$ such that Assumption~\ref{spectral_assump} is not satisfied.
Set $x(t):=u_2(t,0)$, using the method of characteristics, $x$ is solution of, for all $t>\tau_1+\tau_2+\tau_3$,
\begin{equation}
    \label{eq:IDE_c_e}
    \begin{aligned}
        x(t)
    &=q_{22}\rho_{23}\rho_{33}U_3(t-(\tau_3+1/\mu_2))+q_{22}\rho_{23}\sigma_3^-\frac{1}{1+\frac{\mu_3}{\lambda_3}}\int_{\frac{1}{\mu_2}}^{\frac{1}{\mu_2}+\tau_3}U_3(t-\nu)d\nu +q_{22}\rho_{22}x(t-\tau_2) \\&~+q_{22}\sigma_2^-\frac{1}{1+\frac{\mu_2}{\lambda_2}}\int_0^{\tau_2}x(t-\nu)d\nu+q_{21}q_{11}U_1(t-\tau_1) + q_{21}\sigma_1^+\frac{1}{1+\frac{\lambda_1}{\mu_1}}\int_0^{\tau_1} U_1(t-\nu)d\nu.
    \end{aligned}
\end{equation}
Taking the Laplace transform of~\eqref{eq:IDE_c_e}, we obtain for all $z \in \C$
$$F_0(z)x(z) = F_2(z)U_3(z) + F_1(z)U_1(z),$$
with 
$$F_0(z) = 1-q_{22}\rho_{22}e^{-\tau_2 z}-q_{22}\sigma_2^-\frac{1}{1+\frac{\mu_2}{\lambda_2}}\int_0^{\tau_2}e^{-\nu z}d\nu,$$
$$F_1(z) = q_{21}q_{11}e^{-\tau_1z}+ q_{21}\sigma_1^+\frac{1}{1+\frac{\lambda_1}{\mu_1}}\int_0^{\tau_1}e^{-\nu z}d\nu,$$
and,
\begin{align*}F_2(z) &= q_{22}\rho_{23}\rho_{33}e^{-(\tau_3+1/\mu_2)z}+q_{22}\rho_{23}\sigma_3^-\frac{1}{1+\frac{\mu_3}{\lambda_3}}\int_{\frac{1}{\mu_2}}^{\frac{1}{\mu_2}+\tau_3}e^{-\nu z}d\nu.\end{align*}
Now fix, \begin{equation}\label{coeff:ce_q_rho}
\rho_{22} =1/2, \quad q_{22}=\rho_{23}=\rho_{33}=q_{21}=q_{11}=1,\end{equation}
and choose the in domain couplings terms to normalize the integral terms, i.e,
\begin{equation}
    \label{coupling:ce_sigma}
    \sigma_2^- = \frac{1+\frac{\mu_2}{\lambda_2}}{2\tau_2}, \quad
    \sigma_1^+=-\frac{1+\frac{\lambda_1}{\mu_1}}{\tau_1}, \quad
    \sigma_3^{-}=-\frac{1+\frac{\mu_3}{\lambda_3}}{\tau_3}.
\end{equation}
Then, the principal part of the IDE~\eqref{eq:IDE_c_e} (i.e the IDE without the integral terms and with $U_1 =U_2 \equiv0$) is exponentially stable because $|q_{22}\rho_{22}|<1$ (see~\cite{halebook}), and,
$$F_0(0) = F_1(0)=F_2(0)=0.$$
We may therefore state the following theorem.
\begin{thm}
\label{thm:contre_exemple}
Let $\lambda_i > 0$ and $\mu_i > 0$ be arbitrary transport speeds, for all $i \in \{1,2,3\}$. 
Then there exist coupling coefficients $q_{ij}$ and $\rho_{ij}$, as well as constant boundary coupling terms $\sigma_i^{+}$ and $\sigma_i^{-}$, such that the following holds. 
In configuration $\Ubar_1, \Ubar_3$,  Assumption~\ref{Assum_stab_OL} and footnote~\ref{footnote:config_assump} stand.
However, the IDE reformulation~\eqref{eq:IDE_c_e} of the original system~\eqref{system_edp_origin} does not satisfy Assumption~\ref{spectral_assump}.
\end{thm}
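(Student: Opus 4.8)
The plan is to exploit the fact that, by the time we reach the statement, all the structural work is already done: the method of characteristics has produced the explicit IDE~\eqref{eq:IDE_c_e} together with its Laplace-domain functions $F_0, F_1, F_2$, and the coefficients are fixed by~\eqref{coeff:ce_q_rho}--\eqref{coupling:ce_sigma}. The heart of the proof is therefore to exhibit a single point $s \in \C$ at which $F_0, F_1, F_2$ vanish simultaneously, since this forces $\operatorname{rank}[F_0(s), F_1(s), F_2(s)] = 0 \neq 1$ and hence directly contradicts Assumption~\ref{spectral_assump}.

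First I would evaluate the three functions at $s = 0$. The choice~\eqref{coupling:ce_sigma} of the constant in-domain couplings is engineered precisely so that, in each $F_j$, the constant (pointwise-delay) contribution cancels the normalized integral contribution: at $s=0$ every exponential weight becomes $1$, each integral $\int e^{-\nu\cdot 0}\,d\nu$ reduces to the length of its interval, and the $\sigma$ coefficients are scaled by exactly that length. Carrying out the elementary cancellation gives $F_0(0) = 1 - q_{22}\rho_{22} - q_{22}\sigma_2^-\tfrac{\tau_2}{1+\mu_2/\lambda_3} = 0$, and likewise $F_1(0)=0$ and $F_2(0)=0$. This establishes the common zero and, as noted above, the failure of the rank condition at $s=0$.

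It then remains to check that the two structural assumptions genuinely hold for this choice, so that the example is admissible and not vacuous. Assumption~\ref{assump:coupling_term_green} on the coefficients entering the chain is immediate, since the relevant couplings $q_{21},\rho_{23}$ (and the remaining ones appearing in~\eqref{eq:IDE_c_e}) are all set to $1$ by~\eqref{coeff:ce_q_rho}. For Assumption~\ref{Assum_stab_OL} one sets every $\sigma_i^{\pm}\equiv 0$, so that~\eqref{system_edp_origin} collapses to a pure transport network with boundary reflections; applying a folding transformation as in the well-posedness discussion, this reduces to a difference equation whose exponential stability is governed by a spectral-radius condition on the boundary coupling matrix. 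The coefficients left unconstrained by~\eqref{coeff:ce_q_rho} (for instance $\rho_{11},\rho_{12},q_{32},q_{33}$) act here as free parameters, and can be chosen small enough that all loop gains of the network stay strictly below one in modulus; together with $|q_{22}\rho_{22}| = 1/2 < 1$ this yields the required strict contraction.

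The main obstacle is this last verification of Assumption~\ref{Assum_stab_OL}. The subtlety is that open-loop exponential stability of the full three-subsystem network is not controlled by the single loop gain $q_{22}\rho_{22}$ visible in the reduced IDE, but by a spectral-radius (dissipativity) condition on the entire boundary coupling operator. One must therefore confirm that the coefficients already frozen by~\eqref{coeff:ce_q_rho}---which were selected solely to force the common zero of $F_0,F_1,F_2$---remain compatible with this global contraction, which is exactly why the argument leans on the still-free coefficients to absorb any excess gain and close the stability estimate.
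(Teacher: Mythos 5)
Your proposal is correct and follows essentially the same route as the paper: with the coefficients~\eqref{coeff:ce_q_rho} and the normalizing choice~\eqref{coupling:ce_sigma}, one checks $F_0(0)=F_1(0)=F_2(0)=0$, so the rank in Assumption~\ref{spectral_assump} drops to $0$ at $s=0$. Your additional verification that the still-free coefficients $\rho_{11},\rho_{12},q_{32},q_{33}$ can be taken small and nonzero so that Assumptions~\ref{Assum_stab_OL} and~\ref{assump:coupling_term_green} hold is sound and in fact more explicit than the paper, which only asserts this compatibility.
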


\section{Conclusion}

In this paper, we have addressed the stabilization of a chain of three hyperbolic PDEs for all two-input boundary configurations. 
The methodology builds on the framework introduced in~\citep[Chapter~9]{AuriolHDR} and extended in~\citep{braun2025stabilizationchainhyperbolicpdes}. 
Because the control design relies on the cancellation of boundary reflection terms, the resulting feedback law may lack robustness to input delays. However, robustness properties can be recovered by applying an appropriately tuned low-pass filter, as shown in~\citep{auriol2023robustification}.

With two inputs, stabilization of the three-PDE chain is always achievable when one input is applied at an endpoint of the chain or when two inputs fully control a single node, as in configuration $(\Ubar_1,\Ubar_4)$. 
For the remaining configurations, an additional spectral assumption~\ref{spectral_assump} is required to apply the proposed approach. 
This assumption, which asserts that three associated holomorphic functions do not vanish simultaneously, is weaker than the spectral controllability condition used in the single-input case~\citep{braun2025stabilizationchainhyperbolicpdes}. 
Nevertheless, as demonstrated in the final section, there exist configurations in which this assumption fails, thereby revealing intrinsic limitations of the methodology.

Future work will focus on extending these results to the stabilization of a chain composed on $N$ systems under arbitrary configurations of $k<N$ boundary inputs. It would also be of great interest to compare the control effort and closed-loop performance of the proposed stabilizing controllers in each configuration.

                                                   







\appendix
\section{Proof of Lemma~\ref{lem:meta_IDE}}
\label{appendix:lem_IDE}
\begin{proof}(for Configuration $\Ubar_1, \Ubar_3$).

We use the \emph{invertible} backstepping transformation $\Bar{\mathcal{F}}$ that maps $(u_i, v_i)_{1\leq i\leq 3}\in H^1([0,1],\mathbb{R}^6)$ to $(\alpha_i, 
\beta_i)_{1\leq i\leq 3}\in H^1([0,1],\mathbb{R}^6)$
defined by 
\begin{equation}
\begin{aligned}
    \mathcal{\alpha}_1(x) &= u_1(x)
    -\int_0^x \begin{pmatrix}
        K_1^{uu}(x, y)\\ K_1^{uv} (x, y)
    \end{pmatrix}^\top\begin{pmatrix}
        u_1(y)\\ v_1(y) 
    \end{pmatrix}dy 
   ,\\
    \beta_1(x) &= v_1(x)  -\int_0^x \begin{pmatrix}
        K_1^{vu}(x, y)\\ K_1^{vv} (x, y)
    \end{pmatrix}^\top\begin{pmatrix}
        u_1(y)\\ v_1(y) 
    \end{pmatrix}dy,
\\
    \mathcal{\alpha}_2(x) &= u_2(x) -\int_x^1 \begin{pmatrix}
        K_2^{uu}(x, y)\\ K_2^{uv} (x, y)
    \end{pmatrix}^\top\begin{pmatrix}
        u_2(y)\\ v_2(y) 
    \end{pmatrix}dy, \\
    \beta_2(x) &= v_2(x)  -\int_x^1 \begin{pmatrix}
        K_2^{vu}(x, y)\\ K_2^{vv} (x, y)
    \end{pmatrix}^\top\begin{pmatrix}
        u_2(y)\\ v_2(y) 
    \end{pmatrix}dy,
\\ 
    \mathcal{\alpha}_3(x) &= u_3(x) -\int_x^1 \begin{pmatrix}
        K_3^{uu}(x, y)\\ K_3^{uv} (x, y)
    \end{pmatrix}^\top\begin{pmatrix}
        u_3(y)\\ v_3(y) 
    \end{pmatrix}dy,\\
    \beta_3(x) &= v_3(x)  -\int_x^1 \begin{pmatrix}
        K_3^{vu}(x, y)\\ K_3^{vv} (x, y)
    \end{pmatrix}^\top\begin{pmatrix}
        u_3(y)\\ v_3(y) 
    \end{pmatrix}dy, 
\end{aligned} \label{backstep_Fbar}
\end{equation}
where $K_i^{uu}$, $K_i^{uv}$, $K_i^{vu}$, $K_i^{vv}$ are piecewise continuous kernels defined {the same way as in~\citep{braun2025stabilizationchainhyperbolicpdes}} and ${}^\top$ denotes the transpose operator.
The transformation $\Bar{\mathcal{F}}$ maps the original system \eqref{system_edp_origin} (with $\Ubar_2=\Ubar_4\equiv 0$) into the following system: $\forall i\in \{1,3\}$, for all $x\in [0,1]$,
\begin{equation} \label{interm:target_config_U1_U_3}
\begin{aligned}
    \partial_t \alpha_2(t,x) + \lambda_2 \partial_x \alpha_2(t,x) &=-\mu_2K_2^{uv}(x,1)\rho_{23}v_3(t,0), \\
    \partial_t \beta_2(t,x) - \mu_2 \partial_x \beta_2(t,x) &=-\mu_2K_2^{vv}(x,1)\rho_{23}v_3(t,0),\\
      \partial_t \alpha_i(t,x) + \lambda_i \partial_x \alpha_i(t,x) &= 0, \\
    \partial_t \beta_i(t,x) - \mu_i \partial_x \beta_i(t,x) &=0.
\end{aligned}
\end{equation}
The map $\Bar{\mathcal{F}}$ is known to be continuously invertible as a Volterra transform (see, e.g., \citep{Yoshida1960}).
Moreover, {using the inverse transform $\Bar{\mathcal{F}}^{-1}$ (that has the same structure as $\Bar{\mathcal{F}}$} we obtain the boundary conditions  
\begin{equation}
\begin{aligned}
 \alpha_1(t,0) &= q_{11}\beta_1(t,0),\\
 \beta_1(t,1) &=U_1(t)\\
\alpha_2(t,0) &= q_{22}\beta_2(t,0) + q_{21}\alpha_1(t,1)  \int_0^1 \sum_{i=1}^2 \alpha_i(t,y) P_{2i}(y)+\beta_i(t,y)W_{2i}dy\\
 \beta_2(t,1) &=\rho_{22}\alpha_2(t,1) + \rho_{23}\beta_3(t,0)+ \int_0^1 \alpha_3(t,y) J_{23}(y)+\beta_3(t,y)C_{23}dy, \\
 \alpha_3(t,0)&= U_3(t),
 \\
 \beta_3(t,1) &= \rho_{33}\alpha_3(t,1),
\end{aligned}\label{boundary_cond_target_system:config_U1_U3}
\end{equation}
where we chose $\Ubar_1$ and $\Ubar_3$ such that,
\begin{align*}
    \Ubar_1(t)&= U_1(t)-\rho_{12}\beta_2(t,0) - \rho_{11}\alpha_1(t,1)-\int_0^1 \sum_{i=1}^2 \alpha_i(t,y) J_{1i}(y)+\beta_i(t,y)C_{1i}dy,\\
 \Ubar_3(t)&=U_3(t)-  q_{32} \alpha_2(t,1) - q_{33}\beta_3(t,0) - \int_0^1 \alpha_3(t,y)P_{33}(y) + \beta_3(t,y)W_{33}(y)dy,
\end{align*}
with $U_1$ and $U_3$ being the new control inputs.
The kernels $J_{ij}, C_{ij}, P_{ij}, W_{ij}$ are piecewise continuous functions defined using the inverse transform $\Bar{\mathcal{F}}^{-1}$
Set $x(t):= \alpha_2(t,1)$, using the method of characteristics by following the method introduced in~\citep[Chapter 9]{AuriolHDR} and used in~\citep{braun2025stabilizationchainhyperbolicpdes}, we show that $x$ is solution to the following IDE, defined for all $t>\tau_1 + \tau_2+\tau_3$ by
\begin{align}\label{IDE:config_U1_U_3} \nonumber
     x(t) &= q_{22}\rho_{22}x(t-\tau_2)+q_{22}\rho_{23}\rho_{33}U_3(t-(\tau_3+1/\mu_2)) +q_{21}q_{11}U_1(t-\tau_1)\\
     &~+ \int_0 
^{\tau_1} U_1(t-\nu)M_1(\nu)d\nu +\int_0
^{\tau_3+\frac{1}{\mu_2}} U_3(t-\nu)M_3(\nu)d\nu +\int_0
^{\tau_2} x(t-\nu)N(\nu)d\nu,
\end{align}
where $M_1, N, M_3$ are piecewise continuous functions depending on the kernels of the inverse transform $\Bar{\mathcal{F}}^{-1}$.
Using~\citep[Theorem~6.1.3]{AuriolHDR}, the exponential stability of the PDE system~\eqref{system_edp_origin} in configuration $\Ubar_1, \Ubar_3$ is equivalent to the exponential stability of the IDE~\eqref{IDE:config_U1_U_3}. 
Moreover, as shown in~\citep{braun2025stabilizationchainhyperbolicpdes}, Assumption~\ref{Assum_stab_OL}, together with the method of characteristics and \citep[Theorem~6.1.3]{AuriolHDR}, ensures that $|q_{11}\rho_{11}| < 1$. Hence, the principal part of the IDE~\eqref{IDE:config_U1_U_3} is exponentially stable (see~\citep{halebook}).
In this configuration, under Assumption~\ref{assump:coupling_term_green} and with $V_1 = U_1$, $V_2= U_3$ the IDE~\eqref{IDE:config_U1_U_3} takes the same form as~\eqref{eq:meta_IDE}.
\end{proof}
\begin{proof}[Configuration $\Ubar_4, \Ubar_3$]
     The proof follows the same structure as in the previous configuration. 
We apply the same backstepping transformation $\Bar{\mathcal{F}}$ to map the PDE system~\eqref{system_edp_origin} in configuration $(\Ubar_4,\Ubar_3)$ to the target system~\eqref{boundary_cond_target_system:config_U1_U3}, with only minor changes in the boundary conditions. 

Setting $x(t) := \beta_1(t,1)$ and proceeding as in the previous configuration, we obtain that the exponential stability of the PDE system~\eqref{system_edp_origin} is equivalent to the exponential stability of the IDE
\begin{align}
\label{IDE:config_U4_U3}
x(t) &= q_{11}\rho_{11} \, x(t-\tau_1) + \rho_{22}\rho_{12} \, U_4(t-\tau_2) \nonumber+ \rho_{33}\rho_{23}\rho_{12} \, U_3(t-\tau_3 - 1/\mu_2)\\
&~+ \int_0^{\tau_2} M_4(\nu) \, U_4(t-\nu)\, d\nu
      + \int_0^{\tau_2+\tau_3} M_3(\nu) \, U_3(t-\nu)\, d\nu  + \int_0^{\tau_1} N(\nu) \, x(t-\nu)\, d\nu,
\end{align}
valid for all $t > \tau_1 + \tau_2 + \tau_3$.
Here, $M_4$, $M_3$, and $N$ are piecewise continuous functions.  
Moreover, as shown in~\citep{braun2025stabilizationchainhyperbolicpdes}, Assumption~\ref{Assum_stab_OL}, together with the method of characteristics and \citep[Theorem~6.1.3]{AuriolHDR}, ensures that $|q_{11}\rho_{11}| < 1$. Hence, the principal part of the IDE~\eqref{IDE:config_U4_U3} is exponentially stable (see~\citep{halebook}).

Finally, under Assumption~\ref{assump:coupling_term_green}, and with the identifications
\[
V_2 := U_3, 
\qquad 
V_1 := U_4,
\]
the IDE~\eqref{IDE:config_U4_U3} is of the same form as the abstract IDE~\eqref{eq:meta_IDE}. 
This completes the proof.
\end{proof}
\section{Proof of Lemma~\ref{lem:meta_IDE} second part}
\label{appendix:proof_Lemma_config_U4_U2}
\begin{proof}[Configuration $\Ubar_4, \Ubar_2$]
We use the \emph{invertible} backstepping transformation $\mathcal{F}$ defined in~\citep{braun2025stabilizationchainhyperbolicpdes}, to map the original system \eqref{system_edp_origin} into the following target system: for all $i\in \{1,3\}$, for all $x\in [0,1]$,
\begin{equation}\label{interm:target_config_U4_U2}
\begin{aligned}
    \partial_t \alpha_i(t,x) + \lambda_i \partial_x \alpha_i(t,x) &= 0, \\
    \partial_t \beta_i(t,x) - \mu_i \partial_x \beta_i(t,x) &=0, \\
    \partial_t \alpha_2(t,x) + \lambda_2 \partial_x \alpha_2(t,x) &=-\mu_2K_2^{uv}(x,1)\Ubar_2(t), \\
    \partial_t \beta_2(t,x) - \mu_2 \partial_x \beta_2(t,x) &=-\mu_2K_2^{vv}(x,1)\Ubar_2(t).
\end{aligned}
\quad 
\end{equation}
The functions $K_i^{uu}$, $K_i^{uv}$, $K_i^{vu}$, $K_i^{vv}$, $Q^{\alpha}$, $Q^{\beta}$, $R^{\alpha}$ are the kernels of the backstepping transform $\mathcal{F}$
 defined in~\citep{braun2025stabilizationchainhyperbolicpdes}.
Originally, the kernels $Q^{\alpha}$, $Q^{\beta}$, $R^{\alpha}$ and $R^{\beta}$ were defined in~\citep{braun2025stabilizationchainhyperbolicpdes} with an added artificial boundary condition necessary for well-posedness, namely for all $y\in [0,1]$,
$Q^{\alpha}(1,y)=Q^{\beta}(1,y) =R^{\alpha}(1,y) =R^{\beta}(1,y)=0$. These boundary conditions can be freely chosen (see~\citep[Lemma 8]{AuriolPietri}); in this context, because $q_{32}\neq0$ (under Assumption~\ref{assump:coupling_term_green}) we can choose,
\begin{align*}R^\alpha(1,y) &=\frac{1}{q_{32}}\big(K_3^{uv}(0,y)-q_{33}K_3^{vv}(0,y)\big),\\
Q^\alpha(1,y) &=\frac{1}{q_{32}}\big(K_3^{uu}(0,y)-q_{33}K_3^{vu}(0,y)\big),\\
R^\beta(1,y)& = \rho_{22}R^\alpha(1,y) +\rho_{23}K_3 ^{vv}(0,y),\\
Q^\beta(1,y) &= \rho_{22}Q^\alpha(1,y) + \rho_{23}K_3^{vu}(0,y). \end{align*}{This choice enables us to cancel the integral terms that could appear at the boundaries $\alpha_3(t,0)$ and $\beta_2(t,1)$.}
Hence, {using the inverse transform $\mathcal{F}^{-1}$ (that has the same structure as $\mathcal{F}$}) we obtain the boundary conditions  
\begin{equation}
\begin{aligned}
 \alpha_1(t,0) &= q_{11}\beta_1(t,0),\\
 \beta_1(t,1) &= \rho_{12}\beta_2(t,0) + \rho_{11}\alpha_1(t,1)+\int_0^1 \sum_{i=1}^3 \alpha_i(t,y) J_{1i}(y)+\beta_i(t,y)C_{1i}dy,\\
\alpha_2(t,0) &=U_4(t) \\
 \beta_2(t,1) &= \Ubar_2(t) +\rho_{22}\alpha_2(t,1) + \rho_{23}\beta_3(t,0)\\
 \alpha_3(t,0)&= q_{32} \alpha_2(t,1) + q_{33}\beta_3(t,0)
 \\
 \beta_3(t,1) &= \rho_{33}\alpha_3(t,1),
\end{aligned}\label{boundary_cond_target_system_config_U4_U2}
\end{equation}
where we chose $\Ubar_4$ such that,
\begin{align*}\Ubar_4(t) &= U_4(t) -q_{22}\beta_2(t,0) - q_{21}\alpha_1(t,1) - \int_0^1 \Big(\sum_{i=1}^3 \alpha_i(t,y) P_{2i}(y)+\beta_i(t,y)W_{2i}\big)dy,\end{align*}
where $U_4$ is a new input.
The kernels $J_{ij}, C_{ij}, P_{ij}, W_{ij}$ are piecewise continuous functions defined in~\citep{braun2025stabilizationchainhyperbolicpdes} that depend on the coupling terms $\sigma_i^+, \sigma_i^-$.
In order to exponentially stabilize the PDE system~\eqref{system_edp_origin} in this configuration, we propose to rewrite the intermediate target system~\eqref{interm:target_config_U4_U2},~\eqref{boundary_cond_target_system_config_U4_U2} as two IDEs.
 Set $x_3(t):= \alpha_3(t,0)$ and $x_1(t) = \beta_1(t,1)$, using the method of characteristics and following the method proposed in~\citep{AuriolHDR} and used in~\citep{braun2025stabilizationchainhyperbolicpdes}, we show that, for all $t>\tau_1 + \tau_2+\tau_3$,
 \begin{align}
     \label{IDE_x3_config_U4_U2}
     x_3(t)&=q_{33}\rho_{33}x_3(t-\tau_3) + q_{32}U_4(t-1/\lambda_2)+\int_0^{\frac{1}{\lambda_2}}M_{32}(\nu)\Ubar_2(t-\nu)d\nu,
 \end{align}
 \begin{align}
     \label{IDE_x1_config_U4_U2}
     x_1(t)&= \rho_{11}q_{11}x_1(t-\tau_1) +\int_0^{\tau_1}
N_1(\nu)x_1(t-\nu)d\nu +\rho_{33}\rho_{23}\rho_{12}x_3(t-\tau_3-1/\mu_2) \nonumber\\
&~+\int_0^{\tau_3+\frac{1}{\mu_2}}N_{13}(\nu)x_3(t-\nu)d\nu+\rho_{22}\rho_{12}U_4(t-\tau_2) +\int_0^{\tau_2}M_{14}(\nu)U_4(t-\nu)d\nu\nonumber\\
&~+\rho_{12}\Ubar_2(t-1/\mu_2) +\int_0^{\tau_2}M_{12}(\nu)\Ubar_2(t-\nu)d\nu,\end{align}
where $M_{34}, N_1, N_{13}, M_{14}, M_{12}$ are piecewise continuous functions.
Using~\citep[Theorem~6.1.3]{AuriolHDR}, the exponential stability of the PDE system~\eqref{system_edp_origin} in configuration $\Ubar_4, \Ubar_3$ is equivalent to the exponential stability of the IDEs~\eqref{IDE_x1_config_U4_U2},\eqref{IDE_x3_config_U4_U2}. Furthermore, using Assumption~\ref{Assum_stab_OL},~\citep[Theorem~6.1.3]{AuriolHDR}, and the method of characteristics, we have $|q_{33}\rho_{33}|<1$ and $|\rho_{11}q_{11}|<1$. Hence the difference equations 
\begin{align}  x_3(t)&=q_{33}\rho_{33}x_3(t-\tau_3)\label{ppx3}\\
 x_1(t)&= \rho_{11}q_{11}x_1(t-\tau_1) \label{ppx1}
\end{align}
are exponentially stable~\citep{halebook}.
{From this point onward, the objective is to reduce the stabilizability analysis of the IDEs~\eqref{IDE_x3_config_U4_U2} and~\eqref{IDE_x1_config_U4_U2} to the analysis of a single IDE of the form~\eqref{eq:meta_IDE}. 
The strategy is to first eliminate the integral term in~\eqref{IDE_x3_config_U4_U2}, and then remove the pointwise delayed $U_4$ term in~\eqref{IDE_x1_config_U4_U2}. Finally we will inject~\eqref{IDE_x3_config_U4_U2} in~\eqref{IDE_x1_config_U4_U2}.
}
To do so, we apply the following change of variable,
 \begin{equation} \label{changevarx3bar}
\Bar{x}_3(t) = x_3(t) - \int_0^{\frac{1}{\lambda_2}}M_{32}(\nu)\Ubar_2(t-\nu)d\nu.
\end{equation}
Hence the IDE~\eqref{IDE_x3_config_U4_U2} becomes,
\begin{align*}
    \Bar{x}_3(t) &= q_{33}\rho_{33}\Bar{x}_3(t-\tau_3) + U_4(t-1/\lambda_2)+  q_{33}\rho_{33}\int_0^{\frac{1}{\lambda_2}}M_{32}(\nu)\Ubar_2(t-\nu-\tau_3)d\nu
\end{align*}

Let us assume here that $\tau_3>1/\lambda_2$ without loss of generality. If $\tau_3\leq \frac{1}{\lambda_2}$ then one should first iterate $n$ times the change of variable~\eqref{changevarx3bar}, 
with $n\in \N$ such that $n\tau_3>1/\lambda_2$.
Hence, we can choose $U_4$ such that,
$$U_4(t) = \Tilde{U}_4(t) - \frac{ q_{33}\rho_{33}}{q_{32}}\int_0^{\frac{1}{\lambda_2}}M_{32}(\nu) \Ubar_2(t-\nu-\tau_3+1/\lambda_2)d\nu,$$
with $\Tilde{U}_4$ being the new control input.
We obtain the following IDE,
\begin{align}\label{eq:IDEx3bar}
    \Bar{x}_3(t) &= q_{33}\rho_{33}\Bar{x}_3(t-\tau_3) + q_{32}\Tilde{U}_4(t-1/\lambda_2)
\end{align}
Moreover, the IDE~\eqref{IDE_x1_config_U4_U2} becomes,
\begin{align}
    \label{eqintermediaire1}
     x_1(t)&= \rho_{11}q_{11}x_1(t-\tau_1) +\int_0^{\tau_1}
N_1(\nu)x_1(t-\nu)d\nu \nonumber+\rho_{33}\rho_{23}\rho_{12}\Bar{x}_3(t-\tau_3-1/\mu_2)\\
&~+\int_0^{\tau_3+\frac{1}{\mu_2}}N_{13}(\nu)\Bar{x}_3(t-\nu)d\nu+\rho_{22}\rho_{12}\Tilde{U}_4(t-\tau_2) +\int_0^{\tau_2}M_{14}(\nu)\Tilde{U}_4(t-\nu)d\nu\nonumber\\
&~+\rho_{12}\Ubar_2(t-1/\mu_2) +\int_0^{{\tau_2 +\tau_3}}\Tilde{M}_{12}(\nu)\Ubar_2(t-\nu)d\nu,
\end{align}
with $\Tilde{M}_{12}$ a piecewise continuous function.
Then, we choose,
$$\Ubar_2(t) = U_2(t) - \rho_{22}\Tilde{U}_4(t-1/\lambda_2),$$
with $U_2$ being a new control input.
Hence, the IDE~\eqref{eqintermediaire1} becomes,
\begin{align}
    \label{eqintermediaire2}
     x_1(t)&= \rho_{11}q_{11}x_1(t-\tau_1) +\int_0^{\tau_1}
N_1(\nu)x_1(t-\nu)d\nu +\rho_{33}\rho_{23}\rho_{12}\Bar{x}_3(t-\tau_3-1/\mu_2)\nonumber \\
&~+\int_0^{\tau_3+\frac{1}{\mu_2}}N_{13}(\nu)\Bar{x}_3(t-\nu)d\nu+\int_0^{\tau_2+\tau_3+\frac{1}{\lambda_2}}\Tilde{M}_{14}(\nu)\Tilde{U}_4(t-\nu)d\nu\nonumber\\
&~+\rho_{12}U_2(t-1/\mu_2) +\int_0^{{\tau_2 +\tau_3}}\Tilde{M}_{12}(\nu)U_2(t-\nu)d\nu,
\end{align}
with $\Tilde{M}_{14}$ a piecewise continuous function.
{Then, defining the new input,
$$\Bar{\Bar{U}}_4(t)=\Tilde{U}_4(t) -q_{33}\rho_{33}\Tilde{U}_4(t-\tau_3),$$}
we obtain
for all $t>2\tau_2+\tau_3+\tau_1+1/\lambda_2$,
    \begin{align}
    \label{IDE_finale_config_U4_U3}
     x(t)&= \rho_{11}q_{11}x(t-\tau_1) +\int_0^{\tau_1}
N_1(\nu)x(t-\nu)d\nu +\rho_{33}\rho_{23}\rho_{12}q_{32}\Tilde{U}_4(t-\tau_3-\tau_2)\nonumber \\
&~+\int_0^{2\tau_3+\tau_2+\frac{1}{\lambda_2}}\Tilde{M}_4(\nu)\Bar{\Bar{U}}_4(t-\nu)d\nu+\rho_{12}\Tilde{U}_2(t-1/\mu_2) \nonumber\\
&~+\int_0^{{\tau_2 +\tau_3}}\Tilde{M}_{12}(\nu)\Tilde{U}_2(t-\nu)d\nu,
\end{align}
with $x(t) = x_1(t) -q_{33}\rho_{33}x_1(t-\tau_3)$, and $\Tilde{M}_4$ a piecewise continuous function.
\end{proof}
\section{A Lemma about common zeros of holomorphic functions}
\label{proof:lem_spect}
\begin{lem} \label{lem:controllability_spect}
Let $f_0, f_1, f_2$ be non–identically zero holomorphic functions on $\C$.
Assume that for all $s\in \C$,
\begin{equation}
\label{rank_assum_lem} 
\operatorname{rank}[f_0(s), f_1(s), f_2(s)] = 1.
\end{equation}
Then, for all $\varepsilon>0$, there exist $T\in (0,\varepsilon)$  and
$\alpha\in \R$ (both chosen in the complement of a non-dense countable set) such that for all $s\in \C$,
\[
\operatorname{rank}\Big[
f_0(s)-\alpha \!\int_0^T e^{-\nu s}\,d\nu\, f_2(s),\,
f_1(s)
\Big]=1.
\]
\end{lem}

\begin{proof}
For $i\in\{0,1,2\}$, define
\(
Z_i:=\{s\in\C:\,f_i(s)=0\}.
\)
Since each $f_i$ is holomorphic and not identically zero, $Z_i$ is
discrete, hence countable. In particular, $Z_0\cap Z_1$ is countable.

For $T>0$, define
\(
K_T(s):=\int_0^T e^{-\nu s}\,d\nu.
\)
For $s\neq0$,
\(
K_T(s)=\frac{1-e^{-Ts}}{s},
\)
and by continuity $K_T(0)=T\neq0$.
Thus
\(
Z_T:=\{s\in\C:\,K_T(s)=0\}
=\left\{\frac{2\pi i k}{T}:\,k\in\Z\setminus\{0\}\right\}.
\)

We first choose $T$ such that \(
Z_0\cap Z_1\cap Z_T=\varnothing.
\)
Let $T_b>0$ such that there exists $s\in Z_0\cap Z_1\cap Z_T$ satisfies
\(
s=\frac{2\pi i k}{T_b}
\)
for some $k\in\Z\setminus\{0\}$.
Hence such a $T_b$ must belong to the set
\[
\mathcal T_{\mathrm{bad}}
:=
\Bigl\{\,T_b>0:\,
\exists s\in Z_0\cap Z_1,\;
\exists k\in\Z\setminus\{0\}
\text{ such that }
T_b=\tfrac{2\pi i k}{s}
\Bigr\}.
\]
Since $Z_0\cap Z_1$ is countable, the set
$\mathcal T_{\mathrm{bad}}$ is countable.
Therefore we can choose
\(
T\in (0,\varepsilon)\setminus \mathcal T_{\mathrm{bad}.}
\)

Define
\[
\tilde f_T(s):=K_T(s)f_2(s).
\]

We now choose $\alpha$.

For $s\in Z_1\setminus (Z_2\cup Z_T)$ we have $\tilde f_T(s)\neq0$, and the equation
\(
f_0(s)-\alpha \tilde f_T(s)=0
\)
forces the unique value
\(
\alpha=\frac{f_0(s)}{\tilde f_T(s)}.
\)
Define the set of forbidden real values
\[
\mathcal A_{bad}:=
\Bigl\{
\tfrac{f_0(s)}{\tilde f_T(s)}:\,
s\in Z_1\setminus (Z_2\cup Z_T)
\Bigr\}
\cap\R.
\]
Since $Z_1\setminus (Z_2\cup Z_T)$ is countable, $\mathcal A_{bad}$ is countable.
Choose
\(
\alpha\in\R\setminus \mathcal A_{bad}.
\)

We now check the rank condition.
Fix $s\in\C$.

\medskip
\noindent
\emph{Case 1: $s\notin Z_1$.}
Then $f_1(s)\neq0$, hence the rank is $1$.

\medskip
\noindent
\emph{Case 2: $s\in Z_1\cap Z_2$.}
By \eqref{rank_assum_lem}, $f_0(s)\neq0$, so
\(
f_0(s)-\alpha\tilde f_T(s)=f_0(s)\neq0.
\)

\medskip
\noindent
\emph{Case 3: $s\in Z_1\cap Z_T$.}
Then $\tilde f_T(s)=0$, so
\(
f_0(s)-\alpha\tilde f_T(s)=f_0(s).
\)
Since $Z_0\cap Z_1\cap Z_T=\varnothing$, we have $f_0(s)\neq0$.

\medskip
\noindent
\emph{Case 4: $s\in Z_1\setminus (Z_2\cup Z_T)$.}
Then $\tilde f_T(s)\neq0$, and since $\alpha\notin \mathcal A_{bad}$,
\(
f_0(s)-\alpha\tilde f_T(s)\neq0.
\)

\medskip
In all cases, the two components cannot vanish simultaneously,
so the rank is $1$ for all $s\in\C$.
\end{proof}

\bibliographystyle{abbrv}
\bibliography{references}
               
\end{document}